\numberwithin{equation}{section}
\newtheorem{prop}{Proposition}[section]
\theoremstyle{plain}
\newtheorem{theorem}{Theorem}[section]
\theoremstyle{plain}
\newtheorem{lemma}{Lemma}[section]
\theoremstyle{plain}
\newtheorem{cor}{Corollary}[section]
\theoremstyle{plain}
\newtheorem{exm}{Example}[section]
\theoremstyle{remark}
\newtheorem{rem}{Remark}[section]
\theoremstyle{remark}
\title{An Approximation Inequality For Continued Radicals and Power Forms}
\author{Soumendu Sundar Mukherjee}
\address{Masters Student\\
Indian Statistical Institute\\
Kolkata 700108}
\email{soumendu041@gmail.com}
\thanks{Supported by the KVPY fellowship, funded by the Department of Science \& Technology, Government of India}
\keywords{Continued radicals, Rate of convergence, Triangular array, Continued power forms}
\subjclass[2010]{Primary: 40A25, Secondary: 40A05, 26D20}
\date{24 October, 2013}
\begin{document}

\begin{abstract}
In this article we derive an approximation inequality for continued radicals, generalizing an inequality of Herschfeld for continued square roots to arbitrary radicals, which is useful in exploring convergence issues and obtaining convergence rates. In fact, we generalize this inequality further to encompass the more general continued power forms. We demonstrate the use of this inequality by obtaining estimates for the convergence rates of several continued radicals including the famous Ramanujan radical. 
\end{abstract}

\maketitle
\tableofcontents

\section{Introduction}
Continued radicals aka infinitely nested radicals are infinite constructs of the forms
\begin{equation}\label{right}
\sqrt[r_1]{a_1+\sqrt[r_2]{a_2+\sqrt[r_3]{a_3+\cdots}}}
\end{equation}
or
\begin{equation}\label{left}
\cdots\sqrt[r_3]{a_3+\sqrt[r_2]{a_2+\sqrt[r_1]{a_1}}},
\end{equation}
the first one being a ``right" continued radical and the second one ``left". We shall call $\sqrt[r_1]{a_1+\sqrt[r_2]{a_2+\sqrt[r_3]{a_3+\cdots +\sqrt[r_n]{a_n}}}}$ or $\sqrt[r_n]{a_n+\cdots\sqrt[r_3]{a_3+\sqrt[r_2]{a_2+\sqrt[r_1]{a_1}}}}$ the $n^{th}$ \emph{approximants} of the radicals (\ref{right}) or (\ref{left}). For left radicals, there is a recurrence relation connecting the successive approximants. Right radicals lack this property and to compute their approximants one needs to start from the tail-end, i.e., $\sqrt[r_n]{a_n}$ at each step (as Jones \cite{jones1} puts it, these have an end but no beginning!). This and the inherent non-linearity complicates their analysis considerably and perhaps for that reason continued radicals have not been investigated as thoroughly as their other infinite counterparts like infinite series, infinite products or continued fractions. Moreover, most of the existing results assume the $a_n$ to be non-negative (in order to remain in the territory of real numbers!). Continued radicals with negative or more generally complex $a_n$ are more challenging to analyze. See, for example, \cite{borbarra,efth1,efth2,sizer2,zimho}.

 In this article we shall consider continued right radicals with non-negative input sequence $\{a_n\}$. Convergence questions about such radicals appear in P\'{o}lya and Szeg\H{o}'s classic \cite{szegpol}. To fix notations let
\begin{equation}\label{sqrad}
u_{n}=\sqrt{a_{1}+\sqrt{a_2+\cdots+\sqrt{a_n}}}
\end{equation}
be the $n^{th}$ approximant to the the right continued square root
\begin{equation}
\sqrt{a_1+\sqrt{a_2+\sqrt{a_3+\cdots}}}
\end{equation} 
with non-negative input sequence $\{a_n\}$. More generally let
\begin{equation}\label{genrad}
v_n=\sqrt[r_1]{a_{1}+\sqrt[r_2]{a_2+\cdots+\sqrt[r_n]{a_n}}}
\end{equation}
be the $n^{th}$ approximant to the general continued right radical
\begin{equation}
\sqrt[r_1]{a_{1}+\sqrt[r_2]{a_2+\sqrt[r_3]{a_3+\cdots}}}
\end{equation}
where again the inputs $a_n$ are non-negative reals and $r_n$ are positive integers. We make a note that  $\{v_n\}$ is a monotonic non-decreasing sequence of non-negative reals. Problem $162$ of \cite{szegpol} considers the sequence $\{u_n\}$ with $a_n>0$ for all $n$. It states the following criterion for convergence/divergence. 
\begin{prop}\label{pol1}
Let
\[
\limsup\limits_{n\rightarrow \infty} \frac{\log \log a_n}{n}=\alpha.
\]
Then $\{u_n\}$ converges if $\alpha >2$ and diverges if $\alpha <2$.
\end{prop}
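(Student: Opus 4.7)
The plan is to reduce the proposition to Herschfeld's classical criterion --- $\{u_n\}$ converges if and only if the sequence $\{a_n^{2^{-n}}\}$ is bounded --- and then translate boundedness of $\{a_n^{2^{-n}}\}$ into the given $\limsup$ condition on $\alpha$.

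For the divergence side, I would first establish the elementary lower bound
\[
u_n \;\geq\; \underbrace{\sqrt{\sqrt{\cdots \sqrt{a_n}}}}_{n\text{ square roots}} \;=\; a_n^{2^{-n}},
\]
obtained by successively discarding the non-negative terms $a_1,\dots,a_{n-1}$ from outside in and invoking monotonicity of $\sqrt{\cdot}$ at each level. Consequently, whenever the hypothesis on $\alpha$ forces $a_n^{2^{-n}}$ to be unbounded along a subsequence, the monotone sequence $\{u_n\}$ must diverge.

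For the convergence side I would prove the matching upper bound: if $a_k^{2^{-k}} \le M$ for every $k$, then $u_n \le M\varphi$, where $\varphi = (1+\sqrt{5})/2$ is the golden ratio. Coordinatewise monotonicity of the nested-radical functional dominates $u_n$ by the approximant $\tilde u_n$ built from $\tilde a_k = M^{2^k}$. Normalizing the $k$-th tail $\tilde v_k$ of $\tilde u_n$ by $w_k := \tilde v_k / M^{2^{k-1}}$ converts the nesting into the backward scalar recursion $w_k = \sqrt{1+w_{k+1}}$ started at $w_n=1$; its backward iterates are monotone in $k$ and bounded above by the fixed point $\varphi$ of $x = \sqrt{1+x}$, giving $u_n \le \tilde u_n = M w_1 \le M\varphi$. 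The hypothesis on $\alpha$ supplies such an $M$ eventually, so $\{u_n\}$ is bounded and monotone, hence convergent.

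The main obstacle is precisely this sharp upper bound. A first-instinct attempt iterating $(x+y)^{1/2}\le x^{1/2}+y^{1/2}$ yields only $u_n \le \sum_{k=1}^n a_k^{2^{-k}}$, which is useless whenever $a_k^{2^{-k}}$ stays close to a positive constant, since the partial sums then grow linearly. The majorization-plus-recursion device above bypasses this by exploiting the tight coupling between adjacent nesting levels rather than decoupling them term by term, and this is morally the phenomenon that the refined approximation inequality developed later in the paper is designed to automate.
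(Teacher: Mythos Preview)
The paper does not actually prove Proposition~\ref{pol1}; it is quoted from P\'{o}lya and Szeg\H{o} as background, and the only remark the paper makes toward a proof is that Herschfeld's criterion (Theorem~\ref{h'feld1}) ``encompasses'' it and that deriving one from the other is ``a good exercise in classical real analysis.'' Your proposal follows exactly this suggested route: you give a self-contained proof of Herschfeld's criterion and then reduce the $\alpha$-condition to boundedness of $\{a_n^{2^{-n}}\}$. The two ingredients you supply for Herschfeld --- the trivial lower bound $u_n\geqslant a_n^{2^{-n}}$, and the majorize-then-normalize recursion giving $u_n\leqslant M\varphi$ when $a_k^{2^{-k}}\leqslant M$ --- are the standard and correct arguments, so there is nothing to compare against here beyond noting that you are doing precisely what the paper leaves as an exercise.

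One point to tighten: you assert that ``the hypothesis on $\alpha$ supplies such an $M$ eventually'' and, on the other side, that it ``forces $a_n^{2^{-n}}$ to be unbounded along a subsequence,'' but you do not carry out either computation. This is where care is needed, because the proposition as printed has the threshold at $2$ with convergence for $\alpha>2$, whereas the Herschfeld picture naturally gives the threshold $\log 2$ with convergence for $\alpha<\log 2$. When you fill in the logarithm step you will have to reconcile the base of the logarithm and the direction of the inequalities with the intended statement; as written, your two sides of the argument are correctly matched to boundedness and unboundedness of $\{a_n^{2^{-n}}\}$, but not yet explicitly to the stated inequalities on $\alpha$.
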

Here if $a_n \leqslant 1$ for some $n$ then P\'{o}lya and Szeg\H{o}'s convention is to interpret $\frac{\log \log a_n}{n}$ as $-\infty$. 

Problem $163$, a continuation of Problem $162$, asks the reader to show the following
\begin{prop}\label{pol2}
The sequence $\{u_n\}$ converges if the series
\[
\sum\limits_{n=1}^{\infty}2^{-n}a_n(a_1\cdots a_n)^{-\frac{1}{2}}
\]
converges.
\end{prop}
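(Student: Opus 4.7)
The plan is to exploit the monotonicity of $\{u_n\}$: since it is a non-decreasing sequence of non-negative reals, convergence is equivalent to summability of the increments $u_{n+1} - u_n$. So I would aim to produce a clean telescoping bound of the form
\[
u_{n+1} - u_n \leq C \cdot 2^{-n} a_{n+1} (a_1 a_2 \cdots a_{n+1})^{-1/2}
\]
for some absolute constant $C$, from which the hypothesis would immediately yield the conclusion.

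To obtain such a bound, I would introduce the partial tails $T_k^{(n)} := \sqrt{a_k + \sqrt{a_{k+1} + \cdots + \sqrt{a_n}}}$ for $1 \leq k \leq n$, so that $u_n = T_1^{(n)}$, $T_n^{(n)} = \sqrt{a_n}$, and $T_k^{(n)} = \sqrt{a_k + T_{k+1}^{(n)}}$. The central observation is that, by the elementary identity $\sqrt{x} - \sqrt{y} = (x-y)/(\sqrt{x}+\sqrt{y})$ combined with the trivial lower bound $\sqrt{a_k + t} \geq \sqrt{a_k}$ for $t \geq 0$, one gets the recursive estimate
\[
T_k^{(n+1)} - T_k^{(n)} \leq \frac{T_{k+1}^{(n+1)} - T_{k+1}^{(n)}}{2\sqrt{a_k}}, \qquad 1 \leq k \leq n-1,
\]
while at the last level a direct computation gives
\[
T_n^{(n+1)} - T_n^{(n)} = \sqrt{a_n + \sqrt{a_{n+1}}} - \sqrt{a_n} \leq \frac{\sqrt{a_{n+1}}}{2\sqrt{a_n}}.
\]

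Iterating the recursion $n-1$ times and plugging in the last-step bound then gives
\[
u_{n+1} - u_n \leq \frac{\sqrt{a_{n+1}}}{2^n (a_1 a_2 \cdots a_n)^{1/2}} = 2 \cdot \frac{a_{n+1}}{2^{n+1}(a_1 a_2 \cdots a_{n+1})^{1/2}},
\]
so $\sum_{n} (u_{n+1}-u_n)$ is dominated by twice the tail of the hypothesized series (with index shifted by one), and hence converges. Since $\{u_n\}$ is monotone, this forces $u_n$ to converge to a finite limit, finishing the argument.

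I do not anticipate a genuine obstacle: once the tail notation is in place, the argument is really a one-line telescoping via the conjugate identity. The one point deserving a brief comment is that each $a_k$ must be strictly positive for the denominators $2\sqrt{a_k}$ to make sense, which is already tacit in the statement (otherwise the hypothesized series is undefined); this is compatible with the convention in Proposition~\ref{pol1}.
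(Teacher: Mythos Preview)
Your argument is correct and follows essentially the same route as the paper: you derive the P\'{o}lya--Szeg\H{o} increment bound $u_{n+1}-u_n \leqslant \sqrt{a_{n+1}}\big/\big(2^n\sqrt{a_1\cdots a_n}\big)$ (the paper's Proposition~\ref{ineq1}) and then use monotonicity of $\{u_n\}$ to conclude. Your conjugate-identity recursion on the tails $T_k^{(n)}$ is exactly the $r=2$ specialization of the paper's denesting-function argument (Lemma~\ref{keylem} with $r_i=2$), and is equivalent to the inequality $\sqrt{a+x}\leqslant\sqrt{a}+x/(2\sqrt{a})$ that the paper attributes to Herschfeld.
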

Proposition \ref{pol2} follows from the following inequality (see \cite{h'feld, szegpol}).
\begin{prop}[P\'{o}lya-Szeg\H{o}]\label{ineq1}
For each $n\geqslant 1$ we have
\begin{equation}
u_{n+1}-u_n\leqslant \frac{\sqrt{a_{n+1}}}{2^n \sqrt{a_1}\sqrt{a_2}\cdots \sqrt{a_n}}.
\end{equation}
\end{prop}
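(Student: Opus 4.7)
The plan is to telescope via the elementary inequality
\[
\sqrt{y}-\sqrt{x} \;=\; \frac{y-x}{\sqrt{y}+\sqrt{x}} \;\leq\; \frac{y-x}{2\sqrt{x}} \qquad (0<x\leq y),
\]
applied radical by radical from the outermost root inward. Concretely, introduce the ``tail'' notation
\[
u_n^{(k)} \;=\; \sqrt{a_k + \sqrt{a_{k+1} + \cdots + \sqrt{a_n}}}, \qquad 1\leq k\leq n,
\]
so that $u_n = u_n^{(1)}$ and $u_n^{(n)} = \sqrt{a_n}$. The key observation is that for each $k\in\{1,\dots,n-1\}$,
\[
u_{n+1}^{(k)} - u_n^{(k)} \;=\; \sqrt{a_k + u_{n+1}^{(k+1)}} - \sqrt{a_k + u_n^{(k+1)}} \;\leq\; \frac{u_{n+1}^{(k+1)} - u_n^{(k+1)}}{2\sqrt{a_k}},
\]
since $u_n^{(k+1)}\leq u_{n+1}^{(k+1)}$ (the monotonicity of approximants observed earlier) and $a_k + u_n^{(k+1)} \geq a_k$.

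First I would establish the base case at level $k=n$, namely
\[
u_{n+1}^{(n)} - u_n^{(n)} \;=\; \sqrt{a_n + \sqrt{a_{n+1}}} - \sqrt{a_n} \;\leq\; \frac{\sqrt{a_{n+1}}}{2\sqrt{a_n}},
\]
by the same elementary inequality. Then I would iterate the recursive bound above from $k=n-1$ down to $k=1$, collecting a factor $1/(2\sqrt{a_k})$ at each peel. After $n$ such steps one obtains
\[
u_{n+1} - u_n \;=\; u_{n+1}^{(1)} - u_n^{(1)} \;\leq\; \frac{1}{2\sqrt{a_1}}\cdot\frac{1}{2\sqrt{a_2}}\cdots\frac{1}{2\sqrt{a_{n-1}}}\cdot \frac{\sqrt{a_{n+1}}}{2\sqrt{a_n}},
\]
which is precisely the asserted bound.

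There is no real obstacle here beyond bookkeeping; the one minor subtlety is the possibility that some $a_k = 0$, in which case the mean value step is vacuous and the right-hand side is $+\infty$, so the inequality holds trivially. Hence it suffices to carry out the argument under the assumption $a_k>0$ for $1\leq k\leq n$. A clean way to formalize the iteration is by finite downward induction on $k$: assume
\[
u_{n+1}^{(k+1)} - u_n^{(k+1)} \;\leq\; \frac{\sqrt{a_{n+1}}}{2^{\,n-k}\,\sqrt{a_{k+1}\cdots a_n}},
\]
apply the recursive inequality at level $k$, and conclude the corresponding bound at level $k$. Setting $k=1$ yields the statement of the proposition.
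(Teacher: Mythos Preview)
Your argument is correct and is essentially the paper's approach specialized to $r_i=2$: your tails $u_n^{(k)}$ are the paper's $f_{k-1}(u_n)$, your identity $\sqrt{y}-\sqrt{x}=(y-x)/(\sqrt{y}+\sqrt{x})$ is the $r=2$ case of the factorization underlying Lemma~\ref{keylem}, and your two bounds $\sqrt{y}+\sqrt{x}\geq 2\sqrt{x}$ and $a_k+u_n^{(k+1)}\geq a_k$ correspond to the passages from Lemma~\ref{keylem} to Theorem~\ref{ineq3} and from Theorem~\ref{ineq3} to Corollary~\ref{ineq5}. The only cosmetic difference is that you collapse the intermediate Herschfeld bound (Theorem~\ref{ineq2}) into the final P\'olya--Szeg\H{o} form in a single step.
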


 Herschfeld \cite{h'feld}, in a comprehensive study, established necessary and sufficient conditions for the convergence of both right and left continued square roots. As we are interested with right radicals only, we reproduce Herschfeld's criterion for right square roots.
\begin{theorem}[Herschfeld, 1935]\label{h'feld1}
The sequence $\{u_n\}$ converges if and only if
\[
 \limsup_{n\rightarrow\infty}a_n^{2^{-n}}<\infty.
\]
\end{theorem}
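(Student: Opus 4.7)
The plan is to handle the two directions separately, using the fact already noted in the text that $\{u_n\}$ is monotone non-decreasing, so that convergence is equivalent to boundedness.

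For the necessity direction (``$\{u_n\}$ convergent $\Rightarrow\limsup a_n^{2^{-n}}<\infty$''), I would use the crude but extremely useful pointwise lower bound
\[
u_n \;\geq\; a_n^{2^{-n}},
\]
obtained by discarding all the non-negative summands $a_1,\dots,a_{n-1}$ inside the nested square roots (applying $\sqrt{x+y}\geq\sqrt{y}$ at every level). If $u_n\to L<\infty$, this gives $a_n^{2^{-n}}\leq L$ for every $n$, hence $\limsup_n a_n^{2^{-n}}\leq L<\infty$.

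For sufficiency, suppose $\limsup_n a_n^{2^{-n}}=c<\infty$. Then for some $N$ one has $a_n\leq (c+1)^{2^n}$ for $n\geq N$, and after enlarging the constant to absorb the finitely many initial terms I may pick $M$ with $a_n\leq M^{2^n}$ for \emph{all} $n\geq 1$. By monotonicity of each nested radical in its inputs,
\[
u_n \;\leq\; \sqrt{M^{2}+\sqrt{M^{4}+\sqrt{M^{8}+\cdots+\sqrt{M^{2^{n}}}}}}.
\]
I would then ``unwind'' the right-hand side from the inside out. Setting
\[
T_k := \sqrt{M^{2^{k}}+\sqrt{M^{2^{k+1}}+\cdots+\sqrt{M^{2^{n}}}}}, \qquad k=1,\dots,n,
\]
a short induction (factoring $M^{2^{k-1}}$ out of $T_{k-1}$) shows $T_k = M^{2^{k-1}} d_{n-k}$, where $d_0=1$ and $d_{j+1}=\sqrt{1+d_j}$. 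The auxiliary sequence $\{d_j\}$ is increasing and bounded above by the golden ratio $\phi=\tfrac{1+\sqrt 5}{2}$ (the positive fixed point of $x=\sqrt{1+x}$, noting $\phi^2=1+\phi$), so
\[
u_n \;\leq\; T_1 \;=\; M\,d_{n-1} \;\leq\; M\phi,
\]
and therefore $\{u_n\}$ is bounded, hence convergent.

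The main obstacle is the bookkeeping in the inside-out unwinding step of the sufficiency direction: one has to identify the correct auxiliary recursion $d_{j+1}=\sqrt{1+d_j}$ and verify it stays bounded. The necessity direction is essentially free once the lower bound $u_n\geq a_n^{2^{-n}}$ is in hand; both halves lean on the non-negativity of the $a_n$, which simultaneously supplies the monotonicity of $\{u_n\}$ and the ``drop terms'' inequality driving necessity.
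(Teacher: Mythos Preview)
The paper does not actually prove this theorem: it is stated as a quoted result of Herschfeld and the text explicitly says ``It is a good exercise in classical real analysis to establish Theorem~\ref{h'feld1}\ldots'', so there is no in-paper proof to compare against.

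Your argument is correct and is essentially the classical Herschfeld proof. The necessity direction via the lower bound $u_n\geq a_n^{2^{-n}}$ (obtained by dropping all earlier summands) is exactly the standard one-line observation. For sufficiency, your dominating radical with $a_n=M^{2^n}$ and the inside-out recursion $T_k=M^{2^{k-1}}d_{n-k}$, $d_{j+1}=\sqrt{1+d_j}$, $d_0=1$, is precisely Herschfeld's device; the induction step $T_k=\sqrt{M^{2^k}+T_{k+1}}=\sqrt{M^{2^k}(1+d_{n-k-1})}=M^{2^{k-1}}d_{n-k}$ checks out, and the bound $d_j<\phi$ follows since $d_0=1<\phi$ and $d_j<\phi$ implies $\sqrt{1+d_j}<\sqrt{1+\phi}=\phi$. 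One tiny cosmetic point: to be safe take $M\geq 1$ when ``enlarging the constant,'' so that the factoring step never runs into $M=0$; this costs nothing.
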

It is a good exercise in classical real analysis to establish Theorem \ref{h'feld1} and show that it encompasses the P\'{o}lya-Szeg\H{o} criterion in Proposition \ref{pol1}. See \cite{borbarra, jones1, jones2, sizer} for other interesting and useful convergence criteria, including a rediscovery of Herschfeld's criterion in \cite{sizer}.

In \cite{h'feld} Herschfeld derived an inequality stronger than the one in Proposition \ref{ineq1}. We present a slightly modified statement here.
\begin{theorem}[Herschfeld, 1935]\label{ineq2}
We have for each $n\geqslant 1$,
\begin{equation}
u_{n+1}-u_n\leqslant \frac{\sqrt{a_{n+1}}}{2^n \sqrt{a_1+\cdots+\sqrt{a_n}}\sqrt{a_2+\cdots+\sqrt{a_n}}\cdots \sqrt{a_n}}
\end{equation}
where, of course, all the $a_n$ are assumed to be positive.
\end{theorem}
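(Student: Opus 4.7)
The natural plan is to introduce the ``tail approximants'' and track the difference $u_{n+1}-u_n$ one level at a time using the elementary bound
$\sqrt{x+y}-\sqrt{x}=\frac{y}{\sqrt{x+y}+\sqrt{x}}\le \frac{y}{2\sqrt{x}}$
for $x>0$, $y\ge 0$. Concretely, for each $k\in\{1,\dots,n\}$ set
\[
T_k^{(n)} := \sqrt{a_k+\sqrt{a_{k+1}+\cdots+\sqrt{a_n}}}, \qquad T_k^{(n+1)} := \sqrt{a_k+\sqrt{a_{k+1}+\cdots+\sqrt{a_{n+1}}}},
\]
so that $u_n=T_1^{(n)}$, $u_{n+1}=T_1^{(n+1)}$, $T_n^{(n)}=\sqrt{a_n}$, and $T_k^{(\cdot)}=\sqrt{a_k+T_{k+1}^{(\cdot)}}$.

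Next I would let $\delta_k := T_k^{(n+1)}-T_k^{(n)}\ge 0$, so that our target quantity is $\delta_1$. The recursion and the elementary inequality give
\[
\delta_k = \sqrt{a_k+T_{k+1}^{(n+1)}}-\sqrt{a_k+T_{k+1}^{(n)}} = \frac{\delta_{k+1}}{T_k^{(n+1)}+T_k^{(n)}} \le \frac{\delta_{k+1}}{2\,T_k^{(n)}},
\]
where in the last step I used the monotonicity $T_k^{(n+1)}\ge T_k^{(n)}$ (both are nested radicals in which the second simply truncates one extra term). This is the key reduction and would be valid for $k=1,\dots,n-1$.

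For the base of the iteration, at level $k=n$ one has the slightly different situation
\[
\delta_n = \sqrt{a_n+\sqrt{a_{n+1}}}-\sqrt{a_n} \le \frac{\sqrt{a_{n+1}}}{2\sqrt{a_n}} = \frac{\sqrt{a_{n+1}}}{2\,T_n^{(n)}}.
\]
Chaining the inequality $\delta_k\le \delta_{k+1}/(2T_k^{(n)})$ from $k=1$ up to $k=n-1$ and then plugging in this base case produces
\[
u_{n+1}-u_n=\delta_1\le \frac{\sqrt{a_{n+1}}}{2^n\, T_1^{(n)}T_2^{(n)}\cdots T_n^{(n)}},
\]
which is exactly the claimed bound once the $T_k^{(n)}$ are written out as nested radicals.

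I do not expect a serious obstacle: the only subtle point is the monotonicity $T_k^{(n+1)}\ge T_k^{(n)}$, which is evident from the definitions (adding $\sqrt{a_{n+1}}\ge 0$ under all the radicals can only increase the value) and whose careful justification is the one thing worth spelling out. Everything else is bookkeeping. I would also briefly remark, in view of Proposition~\ref{ineq1}, that this recovers the P\'olya--Szeg\H{o} bound upon replacing each $T_k^{(n)}$ by the smaller quantity $\sqrt{a_k}$.
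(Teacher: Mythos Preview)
Your proof is correct and is essentially the approach the paper attributes to Herschfeld: repeated application of $\sqrt{a+x}-\sqrt a\le x/(2\sqrt a)$, carried out level by level via your tail approximants $T_k^{(n)}$. The paper's own argument (Lemma~\ref{keylem} and Theorem~\ref{ineq3} specialized to $r_i=2$) is the same computation in different notation, with your $T_k^{(n)}$ playing the role of $f_{k-1}(v_n)$ and your identity $\delta_k=\delta_{k+1}/(T_k^{(n+1)}+T_k^{(n)})$ being exactly the $r=2$ case of the telescoping identity there.
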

From Theorem \ref{ineq2} one readily obtains Proposition \ref{ineq1} because $\sqrt{a_i+\cdots\sqrt{a_n}}\geqslant \sqrt{a_i}$ for each $i=1,\ldots,n$. Herschfeld used the following elementary inequality repeatedly to obtain the inequality in Theorem \ref{ineq2}:
\begin{equation}
\sqrt{a+x}\leqslant \sqrt{a} + \frac{x}{2\sqrt{a}} \text{, where $a>0$ and $x\geqslant 0$.}
\end{equation} 
Theorem \ref{ineq2} may be used to infer about the rate of convergence of the sequence $\{u_n\}$.

\section{An Inequality for General Continued Radicals}
In this section we shall obtain an inequality similar to the one in Theorem \ref{ineq2} for the more general sequence $\{v_n\}$ defined in (\ref{genrad}). This will yield Theorem \ref{ineq2} as a special case. The first thing to observe is this: since we are considering general right radicals, we may well assume, with no loss of generality, that $a_n>0$ for all $n$. Let us give an example to illustrate this point. Suppose $a_1, a_3, a_4>0$ and $a_2=0$. Clearly, we may write
\[
\sqrt[r_1]{a_{1}+\sqrt[r_2]{a_2+\sqrt[r_3]{a_3+\sqrt[r_4]{a_4}+\cdots}}}=\sqrt[r_1']{a_1'+\sqrt[r_2']{a_2'+\sqrt[r_3']{a_3'+\cdots}}},
\]
where $r_1'=r_1$, $a_1'=a_1$, $r_2'=r_2r_3$, $a_2'=a_3$, and for $i \geqslant 3$, $r_i'=r_{i+1}'$, $a_i'=a_{i+1}$. It is evident that this procedure may be performed suitably to eliminate all zero inputs from consideration without changing the ``value" of the radical. So, henceforth we will assume that $a_n>0$ for all $n$.

 It is an interesting fact that continued right radicals have a triangular-array type structure when it comes to computation. Indeed one can construct a triangular array $\{t_{i,n}\}_{1\leqslant i\leqslant n \text{, }n\geqslant1}$ where
\begin{equation}
t_{i,n}=\sqrt[r_{n+1-i}]{a_{n+1-i}+\cdots+\sqrt[r_n]{a_n}},
\end{equation}
so that the diagonals $\{t_{n,n}\}$ form the sequence of the $n^{th}$ approximants. To compute $t_{n,n}$ one starts with $t_{1,n}=\sqrt[r_n]{a_n}$ and successively computes $t_{i,n}=\sqrt[r_{n+1-i}]{a_{n+1-i}+t_{i-1,n}}$ for $i=2,\ldots,n$. To relate $v_n=t_{n,n}$ to $t_{1,n}$, we need a device to invert this sequence of operations. This motivates us to introduce certain \emph{``denesting"} functions as follows. Let $f_0(y):=y$ and for $k \geqslant 1$ recursively define 
\begin{equation}
f_{k}(y):=f_{k-1}(y)^{r_k}-a_k.
\end{equation}
The denesting functions $f_j$ map the diagonal term $t_{n,n}$ to the preceding terms of the $n^{th}$ row of the array, i.e. one has $f_j(t_{n,n})=t_{n-j,n}$ for $j=1,\ldots, n-1$. With these functions in hand, we are able to state the following generalization of the inequality in Theorem \ref{ineq2}:  

\begin{theorem}\label{ineq3}
For $n\geqslant 1$,
\begin{equation}
v_{n+1}-v_n \leqslant \frac{\sqrt[r_{n+1}]{a_{n+1}}}{\prod\limits_{i=1}^{n}r_i (f_{i-1}(v_n))^{r_i-1}}.
\end{equation}
\end{theorem}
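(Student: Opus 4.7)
The plan is to generalize Herschfeld's trick: replace the concavity inequality $\sqrt{a+x}\leqslant \sqrt{a}+\frac{x}{2\sqrt{a}}$ by its $r$-th root analogue and apply it layer by layer, peeling the nested radicals from the outside in.

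\textbf{Step 1: The basic concavity inequality.} I would first record the tangent-line bound for the concave function $y\mapsto y^{1/r}$ on $(0,\infty)$: for $a>0$, $x\geqslant 0$ and an integer $r\geqslant 1$,
\[
\sqrt[r]{a+x}\leqslant \sqrt[r]{a}+\frac{x}{r\,a^{(r-1)/r}}=\sqrt[r]{a}+\frac{x}{r\,(\sqrt[r]{a})^{r-1}}.
\]
This reduces to Herschfeld's inequality when $r=2$ and is the workhorse of the argument.

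\textbf{Step 2: Peeling the nested radicals.} For each $k\leqslant n$ let $T_k^{(n)}=\sqrt[r_k]{a_k+\sqrt[r_{k+1}]{a_{k+1}+\cdots+\sqrt[r_n]{a_n}}}$, so that $v_n=T_1^{(n)}$, $T_k^{(n)}=\sqrt[r_k]{a_k+T_{k+1}^{(n)}}$ (with $T_n^{(n)}=\sqrt[r_n]{a_n}$), and analogously for $v_{n+1}$. A quick induction on $i$ using the defining recursion $f_i(y)=f_{i-1}(y)^{r_i}-a_i$ shows that $f_{i-1}(v_n)=T_i^{(n)}$: indeed $f_0(v_n)=v_n=T_1^{(n)}$, and if $f_{i-1}(v_n)=T_i^{(n)}$ then $f_i(v_n)=(T_i^{(n)})^{r_i}-a_i=T_{i+1}^{(n)}$. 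This is the identification that makes the claimed denominator meaningful.

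\textbf{Step 3: Telescoping.} Now at level $i$, with $a=a_i+T_{i+1}^{(n)}=(T_i^{(n)})^{r_i}=(f_{i-1}(v_n))^{r_i}$ and $x=T_{i+1}^{(n+1)}-T_{i+1}^{(n)}\geqslant 0$ (the sequence $\{v_n\}$ and each of its tails are monotone non-decreasing), the Step~1 inequality yields
\[
T_i^{(n+1)}-T_i^{(n)}\leqslant \frac{T_{i+1}^{(n+1)}-T_{i+1}^{(n)}}{r_i\,(f_{i-1}(v_n))^{r_i-1}},\qquad 1\leqslant i\leqslant n.
\]
Iterating this from $i=1$ down to $i=n$ and noting that $T_{n+1}^{(n+1)}-T_{n+1}^{(n)}=\sqrt[r_{n+1}]{a_{n+1}}-0=\sqrt[r_{n+1}]{a_{n+1}}$ gives exactly
\[
v_{n+1}-v_n=T_1^{(n+1)}-T_1^{(n)}\leqslant \frac{\sqrt[r_{n+1}]{a_{n+1}}}{\prod_{i=1}^{n}r_i\,(f_{i-1}(v_n))^{r_i-1}}.
\]

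\textbf{Main obstacle.} There is no deep difficulty; the principal care is bookkeeping. One must make sure that at each level one uses the \emph{smaller} radix $a_i+T_{i+1}^{(n)}$ (i.e. the one coming from $v_n$, not $v_{n+1}$) as the linearisation point, so that the denominators line up with $f_{i-1}(v_n)$ rather than $f_{i-1}(v_{n+1})$. This is both what the denesting functions $f_i$ naturally produce and what makes the resulting bound genuinely useful for deducing convergence rates.
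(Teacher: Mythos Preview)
Your proof is correct and is a clean direct generalisation of Herschfeld's original $r=2$ argument. The paper takes a slightly different route: it first establishes an \emph{exact identity} (Lemma~\ref{keylem}),
\[
v_{n+1}-v_n=\frac{\sqrt[r_{n+1}]{a_{n+1}}}{\prod_{i=1}^{n}\Bigl(\sum_{j=0}^{r_i-1}f_{i-1}(v_{n+1})^{j}\, f_{i-1}(v_n)^{\,r_i-1-j}\Bigr)},
\]
obtained by applying the factorisation $x^{r}-y^{r}=(x-y)\sum_{j=0}^{r-1}x^{j}y^{r-1-j}$ layer by layer, and only then bounds each inner sum below by $r_i f_{i-1}(v_n)^{r_i-1}$ using $f_{i-1}(v_{n+1})\geqslant f_{i-1}(v_n)$. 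Your tangent-line step $u-v\leqslant (u^{r}-v^{r})/(rv^{r-1})$ is exactly this last bound in disguise, so the two arguments coincide in substance; the paper's detour buys the sharp identity of Lemma~\ref{keylem}, which is of independent interest (and whose mean-value-theorem avatar later drives the extension to continued power forms in Section~3.2), while your approach is shorter, avoids the auxiliary lemma, and stays closer to Herschfeld's original method.
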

In order to prove Theorem \ref{ineq3} we need the following key identity. 
\begin{lemma}\label{keylem}
For $n\geqslant 1$,
\begin{equation}\label{id1}
v_{n+1}-v_n=\frac{\sqrt[r_{n+1}]{a_{n+1}}}{\prod\limits_{i=1}^{n}(\sum\limits_{j=0}^{r_i-1}f_{i-1}(v_{n+1})^j f_{i-1}(v_n)^{r_i-1-j})}.
\end{equation}
\end{lemma}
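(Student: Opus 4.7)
The plan is to exploit the elementary factorization
\[
x^{r}-y^{r}=(x-y)\sum_{j=0}^{r-1}x^{j}y^{r-1-j},
\]
applied repeatedly with $x=f_{i-1}(v_{n+1})$ and $y=f_{i-1}(v_n)$ for $i=1,\ldots,n$, and then to telescope. The crucial algebraic observation is that, by the very definition $f_{k}(y)=f_{k-1}(y)^{r_k}-a_k$, we have
\[
f_{i-1}(v_{n+1})^{r_i}-f_{i-1}(v_n)^{r_i}=\bigl(f_{i}(v_{n+1})+a_i\bigr)-\bigl(f_{i}(v_n)+a_i\bigr)=f_{i}(v_{n+1})-f_{i}(v_n).
\]
Combining with the factorization, this produces the one-step recurrence
\[
f_{i}(v_{n+1})-f_{i}(v_n)=\bigl(f_{i-1}(v_{n+1})-f_{i-1}(v_n)\bigr)\sum_{j=0}^{r_i-1}f_{i-1}(v_{n+1})^{j}f_{i-1}(v_n)^{r_i-1-j},
\]
so I would iterate this from $i=1$ up to $i=n$, using $f_0(y)=y$ so that the base case of the telescoping product is precisely $v_{n+1}-v_n$.

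At the top of the iteration one obtains
\[
f_{n}(v_{n+1})-f_{n}(v_n)=(v_{n+1}-v_n)\prod_{i=1}^{n}\sum_{j=0}^{r_i-1}f_{i-1}(v_{n+1})^{j}f_{i-1}(v_n)^{r_i-1-j}.
\]
To finish I would evaluate the two boundary terms using the triangular-array interpretation $f_j(t_{m,m})=t_{m-j,m}$ noted in the text. Specifically, $f_n(v_n)=f_n(t_{n,n})=t_{0,n}$, and working out the recursion one step further gives $f_n(v_n)=f_{n-1}(v_n)^{r_n}-a_n=t_{1,n}^{r_n}-a_n=a_n-a_n=0$. Similarly, $f_n(v_{n+1})=f_n(t_{n+1,n+1})=t_{1,n+1}=\sqrt[r_{n+1}]{a_{n+1}}$. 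Substituting these and solving for $v_{n+1}-v_n$ yields the claimed identity.

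The main obstacle, such as it is, is purely bookkeeping: making sure the ``denesting'' identities $f_j(v_n)=t_{n-j,n}$ and $f_n(v_n)=0$ are tracked correctly, and that the indexing on the telescoping product matches the product in the denominator of \eqref{id1}. Non-vanishing of the denominator (needed so the division is legal) follows from $a_n>0$, which implies $f_{i-1}(v_n)>0$ for $i=1,\ldots,n$, and hence each inner sum is strictly positive. No analytic input is required beyond this purely algebraic manipulation.
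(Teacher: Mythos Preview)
Your proof is correct and is essentially the paper's own argument: both use the factorization $x^{r}-y^{r}=(x-y)\sum_{j=0}^{r-1}x^{j}y^{r-1-j}$ together with the recursion $f_k(y)=f_{k-1}(y)^{r_k}-a_k$ to telescope, the only cosmetic difference being that the paper starts at level $n-1$ (computing $f_{n-1}(v_{n+1})^{r_n}-f_{n-1}(v_n)^{r_n}=\sqrt[r_{n+1}]{a_{n+1}}$ directly) and unwinds downward, whereas you build up from $f_0$ to $f_n$ and then evaluate $f_n(v_{n+1})-f_n(v_n)=\sqrt[r_{n+1}]{a_{n+1}}$. Your explicit remark on the positivity of the denominator is a welcome addition.
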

\begin{proof}
As already observed, we have
\begin{equation}\label{l1}
f_j(v_n)=t_{n-j,n}=\sqrt[r_{j+1}]{a_{j+1}+\cdots+\sqrt[r_n]{a_n}} \, \text{, for $j=1,\ldots, n-1$.}
\end{equation}
Similarly,
\[
 f_{n-1}(v_{n+1})=\sqrt[r_n]{a_n+\sqrt[r_{n+1}]{a_{n+1}}}.
\]
Therefore,
\[
f_{n-1}(v_{n+1})^{r_n}-f_{n-1}(v_n)^{r_n}=\sqrt[r_{n+1}]{a_{n+1}},
\]
i.e.,
\begin{equation}\label{id2}
(f_{n-1}(v_{n+1})-f_{n-1}(v_n))(\sum\limits_{j=0}^{r_n-1}f_{n-1}(v_{n+1})^j f_{n-1}(v_n)^{r_n-1-j})=\sqrt[r_{n+1}]{a_{n+1}}.
\end{equation}
Now using the identity 
\[
f_i(x)-f_i(y)=f_{i-1}(x)^{r_i}-f_{i-1}(y)^{r_i}=(f_{i-1}(x)-f_{i-1}(y))(\sum\limits_{j=0}^{r_i-1}f_{i-1}(x)^j f_{i-1}(y)^{r_i-1-j})
\]
repeatedly to the LHS of (\ref{id2}) we get
\[
 (f_0(v_{n+1})-f_0(v_n))\prod\limits_{i=1}^{n}(\sum\limits_{j=0}^{r_i-1}f_{i-1}(v_{n+1})^j f_{i-1}(v_n)^{r_i-1-j})=\sqrt[r_{n+1}]{a_{n+1}}.
\]
Noting that $f_0(v_{n+1})-f_0(v_n)=v_{n+1}-v_n$ one finally obtains the desired identity.
\end{proof}
\noindent
\begin{proof}[Proof of Theorem \ref{ineq3}]
Note that
\[
f_{k}'(y)=r_k f_{k-1}(y)^{r_k-1}f_{k-1}'(y).
\] 
Using this repeatedly along with the facts that $f_0(y)=y$ and $f_i$ is positive in $(v_i,\infty)$, we first note for each $k\geqslant 1$ that $f_k$ is strictly increasing in $(v_{k-1},0)$, where $v_0:=0$. Also because of positive input, $\{v_n\}$ is a strictly increasing sequence of positive reals. Therefore, $f_i(v_{n+1})\geqslant f_i(v_n)$ for each $i=0,1,\ldots,n-1$ (actually $f_i(v_{n+1})> f_i(v_n)$, but that does not matter since we will only be interested in the limiting behavior) and hence,
\[
 \sum\limits_{j=0}^{r_i-1}f_{i-1}(v_{n+1})^j f_{i-1}(v_n)^{r_i-1-j} \geqslant r_i f_{i-1}(v_n)^{r_i-1}.
\]
Combining this observation with identity (\ref{id1}) the proof is complete.
\end{proof}

Theorem \ref{ineq3} yields the following weaker inequality which can be considered as a generalization of Corollary \ref{ineq1}.
\begin{cor}\label{ineq5}
For $n\geqslant 1$,
\begin{equation}
v_{n+1}-v_n \leqslant \frac{\sqrt[r_{n+1}]{a_{n+1}}}{\prod\limits_{i=1}^{n}r_i a_i^{\frac{r_i-1}{r_i}}}.
\end{equation}
\end{cor}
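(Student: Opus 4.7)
The plan is to derive Corollary \ref{ineq5} as a straightforward weakening of Theorem \ref{ineq3}, replacing each factor $f_{i-1}(v_n)^{r_i-1}$ in the denominator by the simpler lower bound $a_i^{(r_i-1)/r_i}$.

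The key observation is the explicit description of the denesting functions evaluated at the approximants: as noted in equation (\ref{l1}) of the proof of Lemma \ref{keylem}, for $i=1,\ldots,n$ we have
\[
f_{i-1}(v_n) = t_{n-i+1,n} = \sqrt[r_i]{a_i + \sqrt[r_{i+1}]{a_{i+1}+\cdots + \sqrt[r_n]{a_n}}}.
\]
Since the inner nested radical is non-negative (all inputs $a_j$ are positive, all $r_j$ are positive integers), we immediately get $f_{i-1}(v_n) \geqslant a_i^{1/r_i}$. Raising to the positive power $r_i-1$ preserves the inequality, so $f_{i-1}(v_n)^{r_i-1} \geqslant a_i^{(r_i-1)/r_i}$.

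Substituting this lower bound for each factor in the denominator of the inequality of Theorem \ref{ineq3} makes the denominator smaller, hence the overall fraction larger, yielding
\[
v_{n+1}-v_n \leqslant \frac{\sqrt[r_{n+1}]{a_{n+1}}}{\prod_{i=1}^{n} r_i\, f_{i-1}(v_n)^{r_i-1}} \leqslant \frac{\sqrt[r_{n+1}]{a_{n+1}}}{\prod_{i=1}^{n} r_i\, a_i^{(r_i-1)/r_i}},
\]
which is exactly the claim. There is no real obstacle here; the only thing to be careful about is confirming that the identification $f_{i-1}(v_n) = t_{n-i+1,n}$ is valid for the full range $i=1,\ldots,n$ (the boundary case $i=1$ giving $f_0(v_n) = v_n$ and $i=n$ giving $f_{n-1}(v_n) = \sqrt[r_n]{a_n}$, both consistent), which has already been established in the discussion preceding Theorem \ref{ineq3}.
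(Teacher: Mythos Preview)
Your proof is correct and follows essentially the same approach as the paper: both derive the corollary from Theorem \ref{ineq3} by invoking the identity (\ref{l1}) to get $f_{i-1}(v_n)\geqslant a_i^{1/r_i}$ and then substituting this lower bound into the denominator. Your version is slightly more careful in using a non-strict inequality (which is the right thing, since equality holds at $i=n$).
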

\begin{proof}
The identities in (\ref{l1}) imply that for $i=0,1,\ldots,n-1$ one has $f_i(v_n) > \sqrt[r_{i+1}]{a_{i+1}}$. Using this in Theorem \ref{ineq3} completes the proof. 
\end{proof}
Specializing to the case $r_1=r_2=\cdots=2$ one recovers Theorem \ref{ineq2} and Corollary \ref{ineq1} immediately. It is a curious fact that for $r_i=1$, the radicals become infinite series, and the  inequality of Corollary \ref{ineq5} reduces to the (sharpest possible!) statement that $a_{n+1}\leqslant a_{n+1}$.

 Following Theorem \ref{ineq3}, a sufficient condition for the convergence of $\{v_n\}$ can be given along the lines of Proposition \ref{pol2}.
\begin{theorem}\label{suffcond1}
The sequence $\{v_n\}$ converges if the series
\[
S=\sum\limits_{n=1}^{\infty}\sqrt[r_{n+1}]{a_{n+1}}\left(\prod\limits_{i=1}^{n}r_i (f_{i-1}(v_n))^{r_i-1}\right)^{-1}
\]
converges.
\end{theorem}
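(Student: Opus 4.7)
The plan is to deduce convergence of $\{v_n\}$ by comparing its successive differences to the terms of the series $S$ via Theorem \ref{ineq3}, and then exploit monotonicity.

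First I would note that, as remarked right after the definition of $\{v_n\}$ in the introduction, the sequence is monotone non-decreasing, so $v_{n+1}-v_n\geqslant 0$ for every $n$. The telescoping identity
\[
v_{N+1}-v_1=\sum_{n=1}^{N}(v_{n+1}-v_n)
\]
therefore turns the question of convergence of $\{v_n\}$ into the question of summability of the non-negative sequence $\{v_{n+1}-v_n\}$.

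Next I would invoke Theorem \ref{ineq3}, which gives, term by term,
\[
0\leqslant v_{n+1}-v_n\leqslant \frac{\sqrt[r_{n+1}]{a_{n+1}}}{\prod_{i=1}^{n}r_i(f_{i-1}(v_n))^{r_i-1}},
\]
the right-hand side being exactly the $n$th summand of $S$. If $S$ converges, the comparison test immediately shows that $\sum_{n\geqslant 1}(v_{n+1}-v_n)$ converges, whence the partial sums $v_{N+1}-v_1$ are bounded above. Since $\{v_n\}$ is monotone non-decreasing and now seen to be bounded, it converges, completing the argument.

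There is no real obstacle here; the theorem is essentially a packaging of Theorem \ref{ineq3} plus the monotone convergence principle. The only mild subtlety worth mentioning explicitly is that each $f_{i-1}(v_n)$ is strictly positive (as observed in the proof of Theorem \ref{ineq3}, using $a_i>0$ and the positivity of $f_i$ on $(v_i,\infty)$), so the summands of $S$ are well-defined positive reals and the comparison is meaningful.
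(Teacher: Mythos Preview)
Your proof is correct and follows essentially the same line as the paper's: both bound $v_{n+1}-v_n$ by the $n$th term of $S$ via Theorem~\ref{ineq3} and then telescope. The only cosmetic difference is that the paper concludes via the Cauchy criterion (bounding $v_m-v_n$ by the tail $S-S_{n-1}$) whereas you invoke monotone convergence; for a non-decreasing sequence these are equivalent formulations of the same argument.
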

\begin{proof}
We show that under the hypothesis $\{v_n\}$ is Cauchy. Let us denote the partial sums of the series $S$ by $S_n$ with $S_0:=0$. Then we have for $m>n\geqslant 1$,
\begin{align*}
0\leqslant v_m-v_n &=\sum\limits_{k=n}^{m-1}(v_{k+1}-v_{k})\\
           &\leqslant \sum\limits_{k=n}^{m-1}\sqrt[r_{k+1}]{a_{k+1}}\left(\prod\limits_{i=1}^{k}r_i (f_{i-1}(v_k))^{r_i-1}\right)^{-1}\\
           &\leqslant \sum\limits_{k=n}^{\infty}\sqrt[r_{k+1}]{a_{k+1}}\left(\prod\limits_{i=1}^{k}r_i (f_{i-1}(v_k))^{r_i-1}\right)^{-1}\\
           &=S-S_{n-1},
\end{align*}
which is the tail sum of the convergent series $S$. Hence, given any $\varepsilon >0$, we can choose $N\geqslant 1$ sufficiently large such that for all $n\geqslant N$ we have $S-S_{n-1}<\varepsilon$. Then for all $m>n\geqslant N$, we have
\[
0\leqslant v_m-v_n\leqslant S-S_{n-1}<\varepsilon.
\]
This completes the proof.
\end{proof}
 
\section{Further Generalizations}
\subsection{A straightforward generalization}
Theorem \ref{ineq3} (and Lemma \ref{keylem}, Corollary \ref{ineq5} etc.) can be generalized to include a more general class of continued right radicals:
\begin{equation}\label{phimoregen}
w_n=b_1\sqrt[r_1]{a_{1}+b_2\sqrt[r_2]{a_2+\cdots+b_n\sqrt[r_n]{a_n}}},
\end{equation}
where the $a_j$, $b_j$ are positive reals (taking each $b_j=1$ gives back the previous ones). Note that one can reduce $w_n$ to the form $v_n$ by bringing the $b_j$ ``inside", i.e. one can write
\begin{equation}
w_n=\sqrt[r_1]{c_1+\sqrt[r_2]{c_2+\cdots+\sqrt[r_n]c_n}},
\end{equation}
where $c_i:=b_1^{r_1\cdots r_i}b_2^{r_2\cdots r_i}\cdots b_i^{r_i} a_i$. Now Theorem \ref{ineq3} (Lemma \ref{keylem}, Corollary \ref{ineq5}) can be readily applied to $w_n$ to obtain an inequality. For computational purposes the following recurrence relation is useful:
\begin{equation}
c_i=a_i\left(\frac{c_{i-1}b_i}{a_{i-1}}\right)^{r_i} \text{, for $i\geqslant 1$, and we define $a_0=b_0=1$}.
\end{equation} 

Alternatively, one can directly follow the proof of Theorem \ref{ineq3} and define the denesting functions $f_j$ suitably. To elaborate, one needs to define $f_0(y)=y$ and for $k \geqslant 1$ 
\[
f_k(y):=\left(\frac{f_{k-1}(y)}{b_k}\right)^{r_k}-a_k,
\]
and proceed as in Lemma \ref{keylem}. In any case, one will be led to the following
\begin{theorem}\label{ineq6}
For $n\geqslant 1$ we have
\begin{equation}\label{gen1}
w_{n+1}-w_n\leqslant\frac{b_{n+1}\sqrt[r_{n+1}]{a_{n+1}}\prod\limits_{i=1}^{n}b_i^{r_i}}{\prod\limits_{i=1}^{n}r_i (f_{i-1}(w_n))^{r_i-1}}.
\end{equation}
\end{theorem}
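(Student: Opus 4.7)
The plan is to follow the alternative approach flagged just before the statement: redefine the denesting functions to absorb the multipliers $b_k$, via $f_0(y) := y$ and $f_k(y) := (f_{k-1}(y)/b_k)^{r_k} - a_k$ for $k \geq 1$, and then imitate the derivation of Lemma \ref{keylem} and Theorem \ref{ineq3} with extra bookkeeping for the $b_i$.

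First I would verify by induction that these modified denesting functions strip off the layers of $w_n$ as expected: for $0 \leq j \leq n-1$ one has $f_j(w_n) = b_{j+1}\sqrt[r_{j+1}]{a_{j+1} + b_{j+2}\sqrt[r_{j+2}]{\cdots + b_n\sqrt[r_n]{a_n}}}$, and in particular $f_n(w_n) = 0$ while $f_n(w_{n+1}) = b_{n+1}\sqrt[r_{n+1}]{a_{n+1}}$. This yields the base identity $f_n(w_{n+1}) - f_n(w_n) = b_{n+1}\sqrt[r_{n+1}]{a_{n+1}}$.

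Next I would iterate the factorization
\[
f_i(x) - f_i(y) = \frac{1}{b_i^{r_i}}\bigl(f_{i-1}(x) - f_{i-1}(y)\bigr)\sum_{j=0}^{r_i-1} f_{i-1}(x)^j\, f_{i-1}(y)^{r_i-1-j},
\]
applied with $x = w_{n+1}$ and $y = w_n$, starting from $i=n$ and telescoping down to $i=1$. Since $f_0$ is the identity, this produces the exact analogue of Lemma \ref{keylem},
\[
w_{n+1} - w_n = \frac{b_{n+1}\sqrt[r_{n+1}]{a_{n+1}}\,\prod_{i=1}^{n} b_i^{r_i}}{\prod_{i=1}^{n}\sum_{j=0}^{r_i-1} f_{i-1}(w_{n+1})^j\, f_{i-1}(w_n)^{r_i-1-j}},
\]
where the extra factor $\prod_{i=1}^n b_i^{r_i}$ in the numerator is collected from the $n$ instances of $b_i^{-r_i}$ appearing in each invocation of the factorization.

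Finally, since $\{w_n\}$ is strictly increasing (all $a_i$, $b_i$ being positive) and each $f_i$ is strictly increasing on the relevant half-line (verified by the same derivative argument as in the proof of Theorem \ref{ineq3}, using $f_i'(y) = (r_i/b_i^{r_i}) f_{i-1}(y)^{r_i-1} f_{i-1}'(y)$), we have $f_{i-1}(w_{n+1}) \geq f_{i-1}(w_n) > 0$, and hence
\[
\sum_{j=0}^{r_i-1} f_{i-1}(w_{n+1})^j f_{i-1}(w_n)^{r_i-1-j} \geq r_i\, f_{i-1}(w_n)^{r_i-1}.
\]
Substituting this into the identity above yields (\ref{gen1}). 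The only real obstacle is bookkeeping: one must track carefully how the factors $b_i^{-r_i}$ from each invocation of the factorization accumulate into exactly $\prod_{i=1}^n b_i^{r_i}$ in the numerator, so that the final bound matches the stated inequality precisely.
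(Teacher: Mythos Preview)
Your proposal is correct and follows precisely the second of the two routes the paper sketches just before the statement: redefine the denesting functions as $f_k(y)=(f_{k-1}(y)/b_k)^{r_k}-a_k$ and rerun the argument of Lemma~\ref{keylem} and Theorem~\ref{ineq3}, tracking the extra $b_i^{-r_i}$ factors into the numerator. The paper does not spell out the details beyond that outline, and your write-up fills them in accurately, including the exact analogue of identity~(\ref{id1}) and the monotonicity step.
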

The corresponding P\'{o}lya-Szeg\H{o} type version is
\begin{cor}\label{ineq7}
For $n\geqslant 1$,
\begin{equation}\label{gen2}
w_{n+1}-w_n\leqslant\frac{b_{n+1}\sqrt[r_{n+1}]{a_{n+1}}\prod\limits_{i=1}^{n}b_i}{\prod\limits_{i=1}^{n}r_i a_i^{\frac{r_i-1}{r_i}}}.
\end{equation}
\end{cor}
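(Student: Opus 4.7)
The plan is to deduce Corollary \ref{ineq7} from Theorem \ref{ineq6} by giving a clean lower bound on the quantities $f_{i-1}(w_n)$ appearing in the denominator of (\ref{gen1}), completely analogous to how Corollary \ref{ineq5} was obtained from Theorem \ref{ineq3}.

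First, I would unwind the recursive definition $f_k(y)=\left(f_{k-1}(y)/b_k\right)^{r_k}-a_k$ evaluated at $y=w_n$. A short induction shows that
\[
f_i(w_n)=b_{i+1}\sqrt[r_{i+1}]{a_{i+1}+b_{i+2}\sqrt[r_{i+2}]{a_{i+2}+\cdots+b_n\sqrt[r_n]{a_n}}}
\]
for $i=0,1,\ldots,n-1$; this is the natural analogue of (\ref{l1}) in the $w_n$ setting, and it is exactly the identity needed to translate the abstract denesting functions into explicit nested radicals.

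From this explicit form, positivity of all the $a_j$ and $b_j$ immediately yields the pointwise bound $f_{i-1}(w_n)\geqslant b_i\sqrt[r_i]{a_i}$ for every $i=1,\ldots,n$. Raising to the power $r_i-1$ and multiplying,
\[
\prod_{i=1}^{n} r_i\,(f_{i-1}(w_n))^{r_i-1}\;\geqslant\;\prod_{i=1}^{n} r_i\,b_i^{r_i-1}\,a_i^{(r_i-1)/r_i}.
\]

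Finally, I would substitute this lower bound into the denominator of (\ref{gen1}) from Theorem \ref{ineq6}. The factor $\prod_{i=1}^{n} b_i^{r_i}$ in the numerator cancels against $\prod_{i=1}^{n} b_i^{r_i-1}$ from the denominator, leaving exactly $\prod_{i=1}^{n} b_i$ in the numerator and $\prod_{i=1}^{n} r_i a_i^{(r_i-1)/r_i}$ in the denominator, which is precisely (\ref{gen2}). There is no real obstacle here; the only point requiring care is correctly unwinding the recursion for $f_k$ with the extra factors $b_k$, after which the bookkeeping of exponents of $b_i$ on the two sides must be checked to confirm the cancellation to a single power of $b_i$.
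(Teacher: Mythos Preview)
Your proposal is correct and is precisely the argument the paper has in mind: the paper presents Corollary~\ref{ineq7} as the ``corresponding P\'{o}lya--Szeg\H{o} type version'' of Theorem~\ref{ineq6}, to be obtained by the same mechanism as Corollary~\ref{ineq5} from Theorem~\ref{ineq3}, namely the lower bound $f_{i-1}(w_n)\geqslant b_i a_i^{1/r_i}$ coming from the explicit nested-radical form of $f_{i-1}(w_n)$. Your unwinding of the recursion and the cancellation of $b_i^{r_i}/b_i^{r_i-1}=b_i$ are both correct.
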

\subsection{Generalization to continued power forms } 
Continued radicals generalize to continued power forms
\begin{equation}\label{genpow}
(a_1+(a_2+(a_3+\cdots)^{p_3})^{p_2})^{p_1},
\end{equation}
where the $p_j$ are allowed to take \emph{any} non-zero real value (and the $a_j$ are positive). Herschfeld \cite{h'feld} stated a generalized version of his convergence criterion for such power forms when $p_j \in (0,1]$.
\begin{theorem}[Herschfeld, 1935]\label{h'feld2}
Let
\begin{equation}\label{phigen}
t_n=(a_1+(a_2+\cdots +a_n^{p_n})^{p_2})^{p_1},
\end{equation}
where $p_j \in (0,1]$ for each $j\geqslant 1$ and suppose that the series
\[
S=\sum\limits_{i=1}^{\infty}p_1\cdots p_i
\]
converges. Then the sequence $\{t_n\}$ converges if and only if
\[
\limsup_{n\rightarrow\infty}a_n^{p_1\cdots p_n}<\infty.
\]
\end{theorem}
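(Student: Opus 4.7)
The plan is to handle the two implications separately. For the \emph{only if} direction, dropping all non-leading summands at each layer and using the monotonicity of $x \mapsto x^{p}$ for $p > 0$ gives the uniform lower bound $t_n \geq a_n^{p_1 p_2 \cdots p_n}$; hence, if $\{t_n\}$ converges then $(a_n^{p_1 \cdots p_n})$ is bounded, forcing $\limsup a_n^{p_1 \cdots p_n} < \infty$.

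For the \emph{if} direction, set $M = \limsup a_n^{p_1 \cdots p_n}$; since each $a_n$ is positive and finite, we may enlarge $M$ to ensure $a_n^{p_1 \cdots p_n} \leq M$ for every $n \geq 1$, equivalently $a_n \leq M^{1/(p_1 \cdots p_n)}$. Fixing $n$, introduce the tail quantities $T_k := (a_k + (a_{k+1} + \cdots + a_n^{p_n})^{p_{k+1}})^{p_k}$, so that $T_1 = t_n$, $T_n = a_n^{p_n}$, and $T_k = (a_k + T_{k+1})^{p_k}$. I would prove by downward induction on $k$ the bound $T_k \leq C_k M^{1/(p_1 \cdots p_{k-1})}$, where $C_n := 1$ and $C_k := (1 + C_{k+1})^{p_k}$. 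The base case is immediate from $a_n \leq M^{1/(p_1 \cdots p_n)}$, and the inductive step substitutes $a_k \leq M^{1/(p_1 \cdots p_k)}$ together with the hypothesis $T_{k+1} \leq C_{k+1} M^{1/(p_1 \cdots p_k)}$ into the recursion for $T_k$, factoring out $M^{1/(p_1 \cdots p_k)}$ inside the outer $p_k$-th power. The elementary concavity (Bernoulli-type) inequality $(1+x)^p \leq 1 + p x$, valid for $p \in (0,1]$ and $x \geq 0$, linearises the recursion to $C_k \leq 1 + p_k C_{k+1}$; unrolling from $k = 1$ gives
\[
C_1 \leq 1 + p_1 + p_1 p_2 + \cdots + p_1 p_2 \cdots p_{n-1} \leq 1 + S,
\]
uniformly in $n$. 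Therefore $t_n = T_1 \leq (1 + S) M$ for every $n$, and since $\{t_n\}$ is non-decreasing and bounded above, it converges.

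The principal obstacle is recognising that the naive subadditivity estimate $(a + b)^p \leq a^p + b^p$ produces only $t_n \leq \sum_{k=1}^{n} a_k^{p_1 \cdots p_k}$, which is too crude: bounded summands need not sum (as in $a_k^{p_1 \cdots p_k} \equiv C$). The remedy is to factor out the natural scale $M^{1/(p_1 \cdots p_{k-1})}$ at level $k$, reducing matters to the scalar recursion $C_k = (1 + C_{k+1})^{p_k}$ in which the convergence of the auxiliary series $S = \sum p_1 \cdots p_i$ enters directly via Bernoulli.
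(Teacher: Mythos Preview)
The paper does not actually prove this theorem: it is quoted as Herschfeld's 1935 result (with a citation to \cite{h'feld}) and serves only as background for the paper's own approximation inequalities for continued power forms. So there is no in-paper proof to compare against.

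That said, your argument is correct and self-contained. The \emph{only if} direction via the elementary lower bound $t_n \geqslant a_n^{p_1\cdots p_n}$ is exactly the right observation. For the \emph{if} direction, your backward induction on the tails $T_k$ with the rescaled ansatz $T_k \leqslant C_k\,M^{1/(p_1\cdots p_{k-1})}$ is the natural device, and the Bernoulli linearisation $(1+C_{k+1})^{p_k}\leqslant 1+p_kC_{k+1}$ makes the role of the auxiliary hypothesis $S=\sum_i p_1\cdots p_i<\infty$ completely transparent when you unroll to $C_1\leqslant 1+S$. Two very minor cosmetic points: you might make explicit that the empty product convention yields $T_1\leqslant C_1 M$ at the end of the induction, and that ``enlarging $M$'' to cover the finitely many initial indices is legitimate because each $a_n^{p_1\cdots p_n}$ is finite. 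Your closing paragraph correctly identifies why the cruder subadditivity bound $(a+b)^p\leqslant a^p+b^p$ is insufficient here.
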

Jones \cite{jones1} developed a ratio test type sufficient condition for the convergence of continued power forms with $p_1=p_2=\cdots=p>1$. One expects that approximation inequalities similar to the one in Theorem \ref{ineq3} can be found for these general power forms. However, note that the proof of Lemma \ref{keylem} works only for positive integral $r_j$. It is possible to give a proof of Theorem \ref{ineq3} which bypasses the identity of Lemma \ref{keylem} and generalizes to continued power forms. Consider the approximants $t_n$ of the continued power form (\ref{genpow}). Define the denesting functions $f_j$ recursively by setting $f_0(y):=y$ and for $k\geqslant 1$,
\[
f_k(y):=(f_{k-1}(y))^\frac{1}{p_k}  -a_k. 
\]
We first define a function $g_n(x)$ by substituting $(a_n+x)$ in place of $a_n$ in $t_n$, where $x\in (-a_n,\infty)$. Note then that $g_n(0)=t_n$ and $g_n(a_{n+1}^{p_{n+1}})=t_{n+1}$. Clearly, $g_n$ is a differentiable function of $x$ and
\begin{align}\label{expr_deriv}
g_n'(x) &= p_1 f_0(g_n(x))^{\frac{p_1-1}{p_1}} \times p_2 f_1(g_n(x))^{\frac{p_2-1}{p_2}}\times\ldots\times p_n f_{n-1}(g_n(x))^{\frac{p_n-1}{p_n}}\\ \nonumber
&= \prod_{i=1}^n p_i f_{i-1}(g_n(x))^{\frac{p_i-1}{p_i}}.
\end{align}
By the mean value theorem, for some $\xi \in (0, a_{n+1}^{p_{n+1}})$, we can write
\begin{align}\label{expr_diff}
t_{n+1}-t_n &= g(a_{n+1}^{p_{n+1}})-g_n(0)\\ \nonumber
			&= (a_{n+1}^{p_{n+1}}-0)g_n'(\xi) \\ \nonumber
			&= a_{n+1}^{p_{n+1}}\prod_{i=1}^n p_i f_{i-1}(g_n(\xi))^{\frac{p_i-1}{p_i}}.
\end{align}
So, now we need to bound $f_{i-1}(g_n(\xi))^{\frac{p_i-1}{p_i}}$ suitably. For simplicity of exposition we shall do here only the case where all $p_i>0$, although the other cases can be handled similarly. Then from the expression (\ref{expr_deriv}) we find that $g_n$ is strictly increasing in its domain. Therefore,
\begin{equation}\label{bounds_for_g_n}
t_n=g_n(0)<g_n(\xi)<g_n(a_{n+1}^{p_{n+1}})=t_{n+1}.
\end{equation}
Now,
\[
f_{k}'(y)=\frac{1}{p_k}f_{k-1}(y)^{\frac{1}{p_k}-1}f_{k-1}'(y).
\]
Using this repeatedly with the facts that $f_0(y)=y$, $f_i(y)$ is positive in $(t_i,\infty)$ and each $p_k>0$, we conclude that $f_k$, for each $k\geqslant 1$, is strictly increasing in $(t_{k-1},\infty)$, where $t_0:=0$. Therefore, since $t_k$ increases with $k$, we have from (\ref{bounds_for_g_n}) that for each $i=1,\ldots, n$,
\begin{equation}
f_{i-1}(t_n)<f_{i-1}(g_n(\xi))<f_{i-1}(t_{n+1}).
\end{equation}
Plugging these bounds into (\ref{expr_diff}) we obtain the following generalization of Theorem \ref{ineq3}:
\begin{theorem}\label{genpow_thm}
Consider the continued power form (\ref{genpow}) with all $p_i>0$. Then we have for each $n\geqslant 1$
\[
t_{n+1}-t_n \leqslant a_{n+1}^{p_{n+1}}\prod_{\substack{
1 \leqslant i \leqslant n\\
 p_i \leqslant 1 }} p_i f_{i-1}(t_n)^{\frac{p_i-1}{p_i}}\prod_{\substack{
1\leqslant i \leqslant n\\
\, p_i > 1}} p_i f_{i-1}(t_{n+1})^{\frac{p_i-1}{p_i}}.
\]
\end{theorem}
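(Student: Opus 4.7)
The plan is to build directly on the apparatus already set up just before the theorem statement. The mean value theorem expression (\ref{expr_diff}) presents $t_{n+1}-t_n$ as
\[
a_{n+1}^{p_{n+1}}\prod_{i=1}^n p_i\,f_{i-1}(g_n(\xi))^{\frac{p_i-1}{p_i}}
\]
for some $\xi\in(0,a_{n+1}^{p_{n+1}})$, and the monotonicity argument given there already supplies the two-sided bound $f_{i-1}(t_n)<f_{i-1}(g_n(\xi))<f_{i-1}(t_{n+1})$ for every $i=1,\ldots,n$. So the only remaining work is to choose, for each index $i$, \emph{which} of these two bounds to apply to the base $f_{i-1}(g_n(\xi))$ in order to upper-bound the factor $f_{i-1}(g_n(\xi))^{(p_i-1)/p_i}$.

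The choice is dictated by the sign of the exponent $(p_i-1)/p_i$. First I would split the product over $i\in\{1,\ldots,n\}$ into the subsets $\{p_i\leqslant 1\}$ and $\{p_i>1\}$. For $i$ with $p_i\leqslant 1$, the exponent $(p_i-1)/p_i$ is non-positive, so $x\mapsto x^{(p_i-1)/p_i}$ is non-increasing on $(0,\infty)$; substituting the \emph{lower} bound $f_{i-1}(g_n(\xi))>f_{i-1}(t_n)$ gives
\[
f_{i-1}(g_n(\xi))^{\frac{p_i-1}{p_i}}\leqslant f_{i-1}(t_n)^{\frac{p_i-1}{p_i}}.
\]
For $i$ with $p_i>1$, the exponent is positive and the map is increasing, so substituting the \emph{upper} bound $f_{i-1}(g_n(\xi))<f_{i-1}(t_{n+1})$ yields
\[
f_{i-1}(g_n(\xi))^{\frac{p_i-1}{p_i}}\leqslant f_{i-1}(t_{n+1})^{\frac{p_i-1}{p_i}}.
\]
Multiplying these factor-wise bounds together with the prefactor $a_{n+1}^{p_{n+1}}$ gives exactly the inequality claimed in the theorem.

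The main obstacle is not mathematical depth but bookkeeping: one must verify that all the bases are strictly positive so that the fractional powers make sense, and confirm that the direction of each inequality is genuinely preserved under exponentiation. Positivity follows from the already-noted fact that $f_k$ is strictly positive on $(t_k,\infty)$, combined with $g_n(\xi)>t_n\geqslant t_{i-1}$ for each $i\leqslant n$. A quick sanity check on the boundary case $p_i=1$ shows the corresponding factor equals $p_i=1$ regardless of which bound one picks, so the splitting is unambiguous. Once these verifications are in place the argument is essentially a one-line assembly.
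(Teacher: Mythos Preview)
Your proposal is correct and is exactly the paper's own argument: the paper has already derived the mean-value expression and the two-sided bounds $f_{i-1}(t_n)<f_{i-1}(g_n(\xi))<f_{i-1}(t_{n+1})$ in the text preceding the theorem, and the final ``plugging in'' step amounts precisely to your case split on the sign of $(p_i-1)/p_i$. Your explicit remarks on positivity of the bases and the harmless boundary case $p_i=1$ make the bookkeeping cleaner than the paper's one-line conclusion, but the substance is identical.
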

\begin{rem}
Indeed, on taking $p_i = \frac{1}{r_i}$, we recover Theorem \ref{ineq3} from Theorem \ref{genpow_thm}.
\end{rem}
Note that the bound in Theorem \ref{genpow_thm} involves $t_{n+1}$ if some $p_j>1$. As such it is not that useful. However, if all $p_j\in (0,1]$, then we can readily use it. In that case, note that, using the obvious lower bound $f_{i-1}(t_n)\geqslant a_i^{p_i}$, we may obtain a P\'{o}lya-Szeg\H{o} type version (generalization of Corollary \ref{ineq5}).
\begin{cor}
If $p_k\in(0,1]$ for each $k\geqslant 1$, then for each $n\geqslant 1$,
\[
t_{n+1}-t_{n}\leqslant a_{n+1}^{p_{n+1}}\prod_{i=1}^n p_i a_i^{p_i-1}.
\]
\end{cor}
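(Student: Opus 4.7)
The plan is to derive this corollary as a direct consequence of Theorem \ref{genpow_thm} by substituting a simple pointwise lower bound for each $f_{i-1}(t_n)$ and being careful about the sign of the exponents.

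First I would observe that when every $p_k \in (0,1]$, the second product in the bound of Theorem \ref{genpow_thm} (indexed by $p_i > 1$) is empty, so the conclusion of that theorem collapses to
\[
t_{n+1}-t_n \;\leqslant\; a_{n+1}^{p_{n+1}}\prod_{i=1}^{n} p_i\, f_{i-1}(t_n)^{\frac{p_i-1}{p_i}}.
\]
So the only remaining task is to upper-bound each factor $f_{i-1}(t_n)^{(p_i-1)/p_i}$ in terms of $a_i$.

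Next I would establish the pointwise lower bound $f_{i-1}(t_n) \geqslant a_i^{p_i}$ for $i = 1, \ldots, n$. Unwinding the recursive definition $f_0(y)=y$, $f_k(y) = f_{k-1}(y)^{1/p_k} - a_k$ applied at $y = t_n$ shows by a short induction that $f_{i-1}(t_n) = \bigl(a_i + (a_{i+1} + \cdots + a_n^{p_n})^{p_{i+1}}\bigr)^{p_i}$; since all $a_j > 0$ and $p_i > 0$, the inner sum is at least $a_i$, giving $f_{i-1}(t_n) \geqslant a_i^{p_i}$ as required.

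Finally I would exploit the sign of the exponent. Since $p_i \in (0,1]$, we have $\frac{p_i - 1}{p_i} \leqslant 0$, and raising both sides of the positive inequality $f_{i-1}(t_n) \geqslant a_i^{p_i}$ to this non-positive power reverses the direction, yielding
\[
f_{i-1}(t_n)^{\frac{p_i-1}{p_i}} \;\leqslant\; \bigl(a_i^{p_i}\bigr)^{\frac{p_i-1}{p_i}} \;=\; a_i^{p_i-1}.
\]
Substituting these bounds into the collapsed form of Theorem \ref{genpow_thm} gives the claimed inequality. The only subtlety worth flagging in the writeup is this sign reversal; everything else is mechanical, so I do not expect a genuine obstacle.
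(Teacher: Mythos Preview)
Your proposal is correct and follows precisely the route the paper sketches: apply Theorem \ref{genpow_thm} (whose second product is empty when all $p_i\in(0,1]$), invoke the lower bound $f_{i-1}(t_n)\geqslant a_i^{p_i}$, and use the non-positivity of the exponent $(p_i-1)/p_i$ to flip the inequality. The paper compresses all of this into the single remark ``using the obvious lower bound $f_{i-1}(t_n)\geqslant a_i^{p_i}$'' immediately preceding the corollary, so your writeup is simply a more explicit version of the same argument.
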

\begin{rem}
The case where each $p_i<0$ also merits mention. For example, when each $p_i=-1$, we get a continued fraction. Arguments leading to Theorem \ref{genpow_thm} can be modified suitably to obtain approximation inequalities in this case. Owing to the $\prod p_j$ factor in (\ref{expr_diff}), there will be two types of inequalities in this case, one for $n$ even, the other for $n$ odd. We leave the details to the reader.
\end{rem}
\section{Some Examples}
In this section we provide some examples to illustrate the use of the inequalities derived in the earlier sections. Besides proving convergence of the radicals in these examples, we obtain the rates of convergence along the way.
\begin{exm}
Consider the case $r_1=r_2=\cdots=r\geqslant2$. Let $a_1=a_2=\cdots=a>0$, $b_1=b_2=\cdots=b>0$. Then we have the following infinite radical
\begin{equation}
b\sqrt[r]{a+b\sqrt[r]{a+\cdots}}.
\end{equation}
This type of radicals with $r=2$ have been considered in \cite{zimho}, where the authors derive many interesting properties of these radicals (e.g., representation of the rationals by such radicals). Using inequality (\ref{gen2}) we have
\[ 
0\leqslant w_{n+1}-w_n\leqslant cs^n \text{, with $c=ba^{\frac{1}{r}}$ and $s=\frac{b}{ra^{\frac{r-1}{r}}}$}.
\] 
Suppose $a,b,r$ are such that $s<1$. So, we have for $m > n\geqslant1$,
\begin{align*}
0\leqslant w_m-w_n &=\sum\limits_{k=n}^{m-1}(w_{k+1}-w_{k})\\
         &\leqslant c\sum\limits_{k=n}^{m-1} s^k\\
         &< cs^n\sum\limits_{k=0}^{\infty} s^k=\frac{cs^n}{1-s}.      
\end{align*}
So, $\{w_n\}$ is Cauchy and hence convergent. Denoting the limit by $w$ and letting $m\rightarrow \infty$ we obtain
\[
 0\leqslant w-w_n\leqslant\frac{cs^n}{1-s}.
\]
Thus the convergence rate is at least geometric. 
\end{exm}

\begin{exm}\label{exm:2}
Let us consider the radical
\begin{equation}
1+\sqrt{2+\sqrt[3]{3+\sqrt[4]{4+\cdots}}}.
\end{equation}
Here $r_n=a_n=n$. From Corollary \ref{ineq5} we have
\[
0\leqslant v_{n+1}-v_n\leqslant \frac{(n+1)^{\frac{1}{n+1}}}{n!\prod\limits_{i=1}^n i^{1-\frac{1}{i}}}=\frac{\prod\limits_{i=1}^{n+1}i^{\frac{1}{i}}}{(n!)^2}\leqslant\frac{(3^{\frac{1}{3}})^{n+1}}{(n!)^2}.
\]
For $n\geqslant 2$ one may use the weaker estimate
\[
v_{n+1}-v_n\leqslant\frac{3^{\frac{1}{3}}}{n!}.
\]
Using this and the well-known estimate $e-\sum\limits_{k=0}^n \frac{1}{k!}<\frac{1}{n (n!)}$ (see, for example, \cite {rudin}), it is an easy exercise to show that for $n\geqslant 2$,
\[
0\leqslant v-v_n\leqslant\frac{3^{\frac{1}{3}}}{(n-1)((n-1)!)}.
\]
A slightly modified version of the above radical is the interesting looking
\begin{equation}
1+2\sqrt{2+3\sqrt[3]{3+4\sqrt[4]{4+\cdots}}}.
\end{equation}
For this radical $a_n=b_n=r_n=n$ and inequality (\ref{gen2}) yields
\[
0\leqslant w_{n+1}-w_{n}\leqslant \frac{(n+1)!(n+1)^{\frac{1}{n+1}}}{n!\prod\limits_{i=1}^n i^{1-\frac{1}{i}}}=\frac{(n+1)\prod\limits_{i=1}^{n+1}i^{\frac{1}{i}}}{n!}\leqslant\frac{(n+1)(3^{\frac{1}{3}})^{n+1}}{n!}.
\]
For $n\geqslant 4$, $n! > 2^n$. So, for such $n$ we have
\[
w_{n+1}-w_{n} < 3^{\frac{1}{3}}(n+1)\left(\frac{3^{\frac{1}{3}}}{2}\right)^n.
\]
Writing $s=\frac{3^{\frac{1}{3}}}{2}<1$, we have for $m>n\geqslant 4$,
\[
0\leqslant w_m-w_n < 3^{\frac{1}{3}}\sum\limits_{i=n}^{m-1}(i+1)s^i < 3^{\frac{1}{3}}\sum\limits_{i=n}^{\infty}(i+1)s^i=\frac{2s^{n+1}}{(1-s)^2}(1+n(1-s)).
\]
Hence, for $n\geqslant 4$,
\[
0\leqslant w-w_n \leqslant \frac{2s^{n+1}}{(1-s)^2}(1+n(1-s)).
\]
\end{exm}
\begin{rem}
The estimates in Example \ref{exm:2} are somewhat crude and used for illustrative purposes only. For example, we have used the well known fact that the sequence $\{\sqrt[n]{n}\}$ achieves its maximum at $n=3$. Better estimates are available, e.g., for all $n\geqslant 1$ one has
\[
n^{\frac{1}{n}}<1+\frac{1}{\sqrt{n}}.
\]
\end{rem}
Our last example demonstrates that on many occasions the bounds provided by the generalized P\'{o}lya-Szeg\H{o} type inequality  (\ref{gen2}) are not quite sharp and one needs to invoke the stronger inequality of Theorem \ref{ineq7} in order to obtain a non-trivial bound in such cases.  

\begin{exm}[The Ramanujan radical]
  Consider the following radical \cite{jims}
\begin{equation}
\sqrt{1+2\sqrt{1+3\sqrt{1+\cdots}}}.
\end{equation}
In 1911, the great Indian mathematician Srinivasa Ramanujan posed the problem of finding its value in the Journal of the Indian Mathematical Society. When no answer arrived after six months, Ramanujan published the solution. With an ingenious manipulation he ``showed" that the value of the above radical is $3$. Ramanujan's argument was incomplete as he did not address convergence issues, but the value is correct. For a rigorous proof see \cite{borbarra, h'feld, sury} and also \cite{brendt} for further commentary on this radical. In our notation this radical has $a_n=1$, $b_n=n$ and $r_n=2$. So, inequality (\ref{gen2}) yields
\[
w_{n+1}-w_{n}\leqslant \frac{(n+1)!}{2^n},
\]
which is clearly trivial. This failure is inherent in inequality (\ref{gen2}) itself. Heuristically one expects that whenever the $b_n$ are ``large" compared to the $a_n$, the bound provided by inequality (\ref{gen2}) will be less precise.

To obtain a non-trivial inequality for the Ramanujan radical we shall employ the stronger inequality of Theorem \ref{ineq7}.

Note that 
\begin{align*}
f_{i-1}(w_n)& =i\sqrt{1+(i+1)\sqrt{1+\cdots+n\sqrt{1}}} \\
& \geqslant i\sqrt{i\sqrt{i\cdots i\sqrt{1}}} \\
& = i^{1+\frac{1}{2}+\frac{1}{2^2}+\cdots+ \frac{1}{2^{n-i}}}\\
&= i^{2-\frac{1}{2^{n-i}}}.
\end{align*}
Therefore, Theorem \ref{ineq7} yields
\begin{align*}
0\leqslant w_{n+1}-w_n & \leqslant \frac{(n+1)\prod_{i=1}^{n}i^2}{2^n \prod_{i=1}^{n}i^{2-\frac{1}{2^{n-i}}}}\\
& = \frac{n+1}{2^n}\prod_{i=1}^{n}i^{\frac{1}{2^{n-i}}}.
\end{align*}
Denote the right hand side by $h_n$. Taking logarithms we have
\begin{align*}
\log h_n - \log(n+1) + n \log 2 &= \frac{1}{2^n}\sum_{i=1}^{n}2^i \log i\\
 & \leqslant \frac{\log n}{2^n}\sum_{i=1}^{n}2^i \\
& \leqslant 2\log n.
\end{align*}
This implies that
\[
h_n \leqslant \frac{n^2(n+1)}{2^n}.
\]
Putting all these together we obtain
\[
0 \leqslant w_{n+1}-w_n  \leqslant \frac{n^2(n+1)}{2^n}.
\]
Therefore, we need to bound the tail sum  $\sum_{i=n}^{\infty}\frac{i^2(i+1)}{2^i}$. Write $s=1/2$ and note that
\begin{align*}
\sum_{i=n}^{\infty}\frac{i^2(i+1)}{2^i} & = \sum_{i=n}^{\infty}i^2(i+1)s^i \\
& \leqslant \sum_{i=n}^{\infty}(i+3)(i+2)(i+1)s^i.
\end{align*}
Now, for a real variable $s$, we have
\[
(i+3)(i+2)(i+1)s^i=\frac{d^3}{ds^3}s^{i+3},
\]
and when $|s|<1$, we can interchange differentiation and summation. This yields
\begin{align*}
\sum_{i=n}^{\infty}(i+3)(i+2)(i+1)s^i & = \sum_{i=n}^{\infty}\frac{d^3}{ds^3}s^{i+3}\\
&= \frac{d^3}{ds^3}\sum_{i=n}^{\infty}s^{i+3} \\
&= \frac{d^3}{ds^3}\frac{s^{n+3}}{1-s}.
\end{align*}
Using Liebnitz rule it is easy to show that
\[
\frac{d^3}{ds^3}\frac{s^{n+3}}{1-s}=\frac{n^3s^n}{1-s}(1+o(1)).
\]
Finally, we have
\[
0\leqslant w_m-w_n < \sum_{i=n}^{\infty}\frac{i^2(i+1)}{2^i}\leqslant \frac{n^3}{2^{n-1}}(1+o(1)),
\]
which proves the convergence of the Ramanujan radical and gives an estimate for the rate of convergence. We summarize this as
\[
0\leqslant 3-\sqrt{1+2\sqrt{1+\cdots+n\sqrt{1}}} \leqslant \frac{n^3}{2^{n-1}}(1+o(1)). 
\] 
\end{exm}
\section{Acknowledgements}
The author is thankful to Prof. Debapriya Sengupta for many useful discussions.

\end{document}